\numberwithin{equation}{section}
\theoremstyle{plain}
\newtheorem{thm}{\protect\theoremname}[section]
\providecommand{\theoremname}{Theorem}
\begin{document}
\title[Some $\Pi_{q}$-identities of Gosper]{Proofs for certain $\Pi_{q}$-conjectures of Gosper }
\author{Bing He}
\address{School of Mathematics and Statistics, Central South University \\
Changsha 410083, Hunan, People's Republic of China}
\email{yuhe001@foxmail.com; yuhelingyun@foxmail.com}
\begin{abstract}
In 2001 W. Gosper introduced a constant $\Pi_{q}$ and conjectured
without proofs many intriguing identities on this constant. In this
paper we establish some modular equations of degrees 3 and 5. From
these modular equations we confirm two groups of $\Pi_{q}$-identities
in Gosper's list. One group involves $\Pi_{q},\Pi_{q^{2}},\Pi_{q^{3}}$
or $\Pi_{q^{6}}$ while the other is related to $\Pi_{q},\Pi_{q^{2}},\Pi_{q^{5}}$
or $\Pi_{q^{10}}$. 
\end{abstract}

\keywords{$\Pi_{q}$-identity; modular equation}
\subjclass[2000]{33D15, 11F03, 14H42}
\maketitle

\section{Introduction}

Throughout this paper we assume that $|q|<1.$ W. Gosper \cite[p. 85]{G}
first introduced the $q$-constant $\Pi_{q}:$
\begin{equation}
\Pi_{q}=q^{1/4}\frac{(q^{2};q^{2})_{\infty}^{2}}{(q;q^{2})_{\infty}^{2}},\label{eq:0}
\end{equation}
where $(a;q)_{\infty}$ is defined by
\[
(a;q)_{\infty}=\prod_{n=0}^{\infty}(1-aq^{n}),
\]
and then stated without proofs many identities involving $\Pi_{q}$
\cite[pp. 102--104]{G} by employing empirical evidence based on a
computer program called MACSYMA. In particular, he \cite[pp. 103--104]{G}
conjectured the following interesting $\Pi_{q}$-identities:
\begin{align}
\frac{\Pi_{q}^{2}}{\Pi_{q^{2}}\Pi_{q^{4}}}-\frac{\Pi_{q^{2}}^{2}}{\Pi_{q^{4}}^{2}} & =4,\label{eq:1-1}\\
\Pi_{q^{3}}^{2}+3\Pi_{q}\Pi_{q^{9}} & =\sqrt{\Pi_{q}\Pi_{q^{9}}}(\Pi_{q}+3\Pi_{q^{9}}),\label{eq:1-6}\\
\frac{\Pi_{q^{2}}\Pi_{q^{3}}^{2}}{\Pi_{q^{6}}\Pi_{q}^{2}} & =\frac{\Pi_{q^{2}}-\Pi_{q^{6}}}{\Pi_{q^{2}}+3\Pi_{q^{6}}},\label{eq:11-2}\\
\Pi_{q^{2}}\Pi_{q^{3}}^{4} & =\Pi_{q^{6}}(\Pi_{q^{2}}-\Pi_{q^{6}})^{3}(\Pi_{q^{2}}+3\Pi_{q^{6}}),\label{eq:11-5}\\
\Pi_{q^{6}}\Pi_{q}^{4} & =\Pi_{q^{2}}(\Pi_{q^{2}}-\Pi_{q^{6}})(\Pi_{q^{2}}+3\Pi_{q^{6}})^{3}.\label{eq:11-6}
\end{align}

The formula \eqref{eq:1-1} was deduced by Gosper \cite[p. 93]{G}.
The $\Pi_{q}$-identity \eqref{eq:1-6} was confirmed by the author
and H.-C. Zhai \cite{HZ} by establishing an identity involving $q$-trigonometric
functions and $\Pi_{q},$ which is equivalent to a theta function
identity that can be proved by using an addition formula for Jacobi's
theta functions of Liu \cite[Theorem 1]{L2} (See \cite{G} for the
definitions of the $q$-trigonometric functions and see \cite{L1}
for applications of Liu's addition formula and the definitions of
Jacobi's theta functions). M. El Bachraoui \cite[Theorem 2.2]{E}
just gave a partial proof of the identity \eqref{eq:11-2}. Namely,
he proved that 
\[
\bigg(\frac{\Pi_{q^{2}}\Pi_{q^{3}}^{2}}{\Pi_{q^{6}}\Pi_{q}^{2}}\bigg)^{2}=\bigg(\frac{\Pi_{q^{2}}-\Pi_{q^{6}}}{\Pi_{q^{2}}+3\Pi_{q^{6}}}\bigg)^{2}.
\]
In \cite[Theorem 2.3]{E} El Bachraoui only showed that \eqref{eq:11-5}
is equivalent to \eqref{eq:11-6}. See \cite{AAE} and \cite{E} for
many other $\Pi_{q}$-identities not mentioned by Gosper \cite{G}.

In this paper we will consider the $\Pi_{q}$-identities \eqref{eq:11-2},
\eqref{eq:11-5}, \eqref{eq:11-6} and many other $\Pi_{q}$-identities
of Gosper and adopt the notations of \cite[Chapters 5 and 6]{B}.

The definition of modular equations \cite[(6.3.2)]{B} is very important.

Let $0<k,\:l<1$ and let $n$ be a positive integer. A relation between
$k$ and $l$ induced by the formula
\[
n\frac{_{2}F_{1}(1/2,1/2;1;1-k^{2})}{_{2}F_{1}(1/2,1/2;1;k^{2})}=\frac{_{2}F_{1}(1/2,1/2;1;1-l^{2})}{_{2}F_{1}(1/2,1/2;1;l^{2})}
\]
is called a modular equation of degree $n.$ Take $\alpha=k^{2},\:\beta=l^{2},$
we say that $\beta$ has degree $n$ over $\alpha.$ The multiplier
$m$ is given by
\[
m=\frac{z_{1}}{z_{n}},
\]
where 
\[
z_{n}=\varphi^{2}(q^{n})
\]
and 
\[
\varphi(q)=\sum_{k=-\infty}^{\infty}q^{k^{2}}.
\]

In Sections \ref{sec:tw} and \ref{sec:th} we will consider many
$\Pi_{q}$-identities conjectured by W. Gosper. These identities can
be divided into two groups, one group of identities involving the
$q$-constants $\Pi_{q},\Pi_{q^{2}},\Pi_{q^{3}}$ or $\Pi_{q^{6}}$
and the other concerning $\Pi_{q},\Pi_{q^{2}},\Pi_{q^{5}}$ or $\Pi_{q^{10}}$.
They are confirmed by establishing several modular equations of degrees
3 or 5.

\section{\label{sec:tw} Identities involving $\Pi_{q},\Pi_{q^{2}},\Pi_{q^{3}}$
or $\Pi_{q^{6}}$}

\subsection{Statement of results}

In \cite[p. 103]{G} W. Gosper conjectured the following interesting
identities:

\begin{align}
\sqrt{\Pi_{q^{2}}\Pi_{q^{6}}}(\Pi_{q}^{2}-3\Pi_{q^{3}}^{2}) & =\sqrt{\Pi_{q}\Pi_{q^{3}}}(\Pi_{q^{2}}^{2}+3\Pi_{q^{6}}^{2}),\label{eq:11-4}\\
\Pi_{q^{2}}^{2}(\Pi_{q}^{4}+18\Pi_{q}^{2}\Pi_{q^{3}}^{2}-27\Pi_{q^{3}}^{4}) & =\Pi_{q}\Pi_{q^{3}}(\Pi_{q}^{4}+16\Pi_{q^{2}}^{4}),\label{eq:11-7}\\
\Pi_{q^{6}}^{2}(\Pi_{q}^{4}-6\Pi_{q}^{2}\Pi_{q^{3}}^{2}-3\Pi_{q^{3}}^{4}) & =\Pi_{q}\Pi_{q^{3}}(\Pi_{q^{3}}^{4}+16\Pi_{q^{6}}^{4}),\label{eq:11-8}\\
\Pi_{q}\Pi_{q^{3}}(\Pi_{q}^{2}\pm4\Pi_{q^{2}}^{2})^{2} & =\Pi_{q^{2}}^{2}(\Pi_{q}\mp\Pi_{q^{3}})(\Pi_{q}\pm3\Pi_{q^{3}})^{3},\label{eq:11-9}\\
\Pi_{q}\Pi_{q^{3}}(\Pi_{q^{3}}^{2}\pm4\Pi_{q^{6}}^{2})^{2} & =\Pi_{q^{6}}^{2}(\Pi_{q}\mp\Pi_{q^{3}})^{3}(\Pi_{q}\pm3\Pi_{q^{3}}).\label{eq:11-10}
\end{align}
In this section we will confirm these identities by establishing the
following theorem.
\begin{thm}
\label{t2-1} The identities \eqref{eq:11-2}\textendash \eqref{eq:11-6}
and \eqref{eq:11-4}\textendash \eqref{eq:11-10} are true.
\end{thm}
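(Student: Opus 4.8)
The plan is to convert each $\Pi_{q}$-identity into a relation among classical modular quantities and then reduce it to a modular equation of degree $3$, aided by the elementary degree-$2$ (Landen) relations. The first step is to note that $\Pi_{q}=q^{1/4}\psi^{2}(q)$, where $\psi(q)=(q^{2};q^{2})_{\infty}/(q;q^{2})_{\infty}$. Using Ramanujan's parametrization $\psi(q)=\sqrt{z/2}\,(\alpha/q)^{1/8}$ from \cite{B}, this collapses to the dictionary
\begin{equation}
\Pi_{q^{n}}=\tfrac12\,z_{n}\,\beta_{n}^{1/4},\qquad \beta_{1}=\alpha,\label{eq:dict}
\end{equation}
in which $\beta_{n}$ is the modulus of degree $n$ over $\alpha$ and $z_{n}=\varphi^{2}(q^{n})$. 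Recording \eqref{eq:dict} for $n=1,2,3,6$ expresses $\Pi_{q},\Pi_{q^{2}},\Pi_{q^{3}},\Pi_{q^{6}}$ through the moduli $\alpha,\beta,\gamma,\delta$ of degrees $1,2,3,6$ and the corresponding $z_{n}$. An equivalent form, coming from $\psi^{2}(q)=\varphi(q)\psi(q^{2})$, is $z_{n}=\Pi_{q^{n}}^{2}/\Pi_{q^{2n}}$ and $\beta_{n}^{1/4}=2\Pi_{q^{2n}}/\Pi_{q^{n}}$; this alone already confirms \eqref{eq:1-1}, which collapses to the theta identity $\varphi^{2}(q)-\varphi^{2}(q^{2})=4q\psi^{2}(q^{4})$.

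Next I would gather the modular equations linking the four degrees. The central ingredient is the degree-$3$ relation between $\alpha$ and $\gamma$, handled through the multiplier $m=z_{1}/z_{3}$ via its rational parametrization
\begin{equation}
\alpha=\frac{(m-1)(m+3)^{3}}{16m^{3}},\qquad \gamma=\frac{(m-1)^{3}(m+3)}{16m},\label{eq:param3}
\end{equation}
which one verifies satisfies $(\alpha\gamma)^{1/4}+\{(1-\alpha)(1-\gamma)\}^{1/4}=1$ and yields the convenient consequences $(\alpha\gamma)^{1/4}=(m-1)(m+3)/(4m)$ and $\Pi_{q}/\Pi_{q^{3}}=\{m(m+3)/(m-1)\}^{1/2}$. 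The identical parametrization governs the pair $(\beta,\delta)$ with multiplier $z_{2}/z_{6}$, while $\alpha\leftrightarrow\beta$ and $\gamma\leftrightarrow\delta$ are tied by the degree-$2$ relations $\sqrt{\beta}=(1-\sqrt{1-\alpha})/(1+\sqrt{1-\alpha})$ and $z_{2}=\tfrac12 z_{1}(1+\sqrt{1-\alpha})$. Composing these produces the composite degree-$6$ modular equations relating $z_{1},z_{2},z_{3},z_{6}$ and $\alpha,\beta,\gamma,\delta$ simultaneously, and it is precisely these modular equations that the section must establish.

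With the dictionary and the modular equations in place, Theorem~\ref{t2-1} reduces to substitution. For each identity I would insert \eqref{eq:dict}, cancel the common powers of the $z_{n}$ and of the moduli, and check that the remainder is forced by \eqref{eq:param3} and the degree-$2$ relations. Grouped by shape, the ratio identity \eqref{eq:11-2}, the quartic identities \eqref{eq:11-5}, \eqref{eq:11-6}, \eqref{eq:11-7}, \eqref{eq:11-8}, and the two $\pm$ families \eqref{eq:11-9}, \eqref{eq:11-10} all become polynomial identities in the chosen parameter, verified mechanically; the square-root identity \eqref{eq:11-4} is treated the same way after squaring, the correct sign being fixed by letting $q\to 0^{+}$.

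The main obstacle is not these final substitutions but the choice of a parameter that simultaneously rationalizes every quarter- and half-power in \eqref{eq:dict}. The multiplier $m=z_{1}/z_{3}$ does not suffice: \eqref{eq:param3} forces $\sqrt{1-\alpha}$ to contain $\{(3-m)(m+1)\}^{1/2}$, which is irrational in $m$, so the degree-$2$ data $\beta$ and $z_{2}$---and hence every identity involving $\Pi_{q^{2}}$---cannot be written rationally in $m$; indeed $m$ is only a degree-$3$ function on the underlying genus-zero curve. The real work is therefore to select the correct uniformizer of that curve, to establish the composite degree-$3$ modular equations in this variable, and to track the signs of all fractional powers so that each extraction of a square or fourth root is legitimate for $0<q<1$. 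Once the uniformizer is fixed, all of \eqref{eq:11-2}--\eqref{eq:11-6} and \eqref{eq:11-4}--\eqref{eq:11-10} emerge as identities in a single parameter.
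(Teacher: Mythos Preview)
Your plan starts correctly and even contains the key observation, but you then walk past it and create a phantom obstacle.

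You write down the dictionary $\beta_{n}^{1/4}=2\Pi_{q^{2n}}/\Pi_{q^{n}}$ and $z_{n}=\Pi_{q^{n}}^{2}/\Pi_{q^{2n}}$.  Applied with $n=1$ and $n=3$ this already expresses all four constants $\Pi_{q},\Pi_{q^{2}},\Pi_{q^{3}},\Pi_{q^{6}}$ through the \emph{two} moduli $\alpha$ (degree $1$) and $\gamma$ (degree $3$) together with $m=z_{1}/z_{3}$; equivalently, Berndt's Theorem~5.4.2(iii) gives $\psi(q^{2})=\tfrac12\sqrt{z_{1}}(\alpha/q)^{1/4}$ and $\psi(q^{6})=\tfrac12\sqrt{z_{3}}(\gamma/q^{3})^{1/4}$ directly in degree-$1$ and degree-$3$ data.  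There is thus no need to introduce separate moduli $\beta,\delta$ of degrees $2$ and $6$, no need for the Landen relations, and no need for a composite degree-$6$ uniformizer.  The ``main obstacle'' you isolate---that $m$ fails to rationalize $\sqrt{1-\alpha}$ and hence the degree-$2$ data---never arises, because $\sqrt{1-\alpha}$ never enters the computation.

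What the paper does is exactly this: it writes each target identity purely in terms of $m$, $(\gamma/\alpha)^{1/4}$, $\alpha^{1/2}$, $\gamma^{1/2}$, and then checks via the parametrization \eqref{eq:param3} that both sides equal the same expression.  Only the single quadratic irrationality $\sqrt{(m-1)(m+3)/m}$ (equivalently $(\gamma/\alpha)^{1/4}$) appears, and it appears symmetrically on the two sides, so no global rational uniformizer is required.  Your proposal stops short of a proof because it misidentifies where the work lies; once you drop the degree-$2$ apparatus and stay with $(\alpha,\gamma,m)$, the seven identities reduce to routine one-variable verifications, which is exactly the content of the paper's Theorem~\ref{t2-2}.
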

Both of the identities \eqref{eq:11-2} and \eqref{eq:11-4} involve
all of the four constants $\Pi_{q},\Pi_{q^{2}},\Pi_{q^{3}}$ and $\Pi_{q^{6}},$
however, each of the other identities in Theorem \ref{t2-1} contains
only three of these four constants. We will show these $\Pi_{q}$-identities
by establishing several modular equations of degree 3.

\subsection{Auxiliary results}

Some auxiliary results are required to prove Theorem \ref{t2-1}.
\begin{thm}
\label{t2-2} Let $\beta$ have degree 3 over $\alpha.$ Then
\begin{align}
m-\sqrt{\frac{\beta}{\alpha}} & =1+\frac{3}{m}\sqrt{\frac{\beta}{\alpha}},\label{eq:4-1}\\
m^{2}\alpha+3\beta & =2(\alpha\beta)^{1/8}(m^{2}\alpha^{1/2}-3\beta{}^{1/2}),\label{eq:4-2}\\
\frac{16}{m}\sqrt{\frac{\beta}{\alpha}} & =\alpha\bigg(1-\frac{1}{m}\sqrt{\frac{\beta}{\alpha}}\bigg)\bigg(1+\frac{3}{m}\sqrt{\frac{\beta}{\alpha}}\bigg)^{3},\label{eq:4-3}\\
\frac{1}{m}\sqrt[4]{\frac{\beta}{\alpha}}\bigg(1+\alpha\bigg) & =\frac{\alpha{}^{1/2}}{4}\bigg(1+\frac{18}{m^{2}}\sqrt{\frac{\beta}{\alpha}}-\frac{27}{m^{4}}\frac{\beta}{\alpha}\bigg),\label{eq:4-4}\\
m\sqrt[4]{\frac{\alpha}{\beta}}\bigg(1+\beta\bigg) & =\frac{\beta{}^{1/2}}{4}\bigg(\frac{m^{4}\alpha}{\beta}-6m^{2}\sqrt{\frac{\alpha}{\beta}}-3\bigg),\label{eq:4-5}\\
\frac{1}{m}\sqrt[4]{\frac{\beta}{\alpha}}\bigg(1\pm\alpha{}^{1/2}\bigg)^{2} & =\frac{\alpha{}^{1/2}}{4}\bigg(1\mp\frac{1}{m}\sqrt[4]{\frac{\beta}{\alpha}}\bigg)\bigg(1\pm\frac{3}{m}\sqrt[4]{\frac{\beta}{\alpha}}\bigg)^{3},\label{eq:4-6}\\
m\sqrt[4]{\frac{\alpha}{\beta}}(1\pm\beta{}^{1/2})^{2} & =\frac{\beta{}^{1/2}}{4}\bigg(m\sqrt[4]{\frac{\alpha}{\beta}}\mp1\bigg)^{3}\bigg(m\sqrt[4]{\frac{\alpha}{\beta}}\pm3\bigg).\label{eq:44-7}
\end{align}
\end{thm}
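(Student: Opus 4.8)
The plan is to reduce every one of the seven relations \eqref{eq:4-1}--\eqref{eq:44-7} to a rational identity in the single variable $m$. The key input is the classical parametric representation of a modular equation of degree $3$: when $\beta$ has degree $3$ over $\alpha$ with multiplier $m$, one has
\begin{equation}
\alpha=\frac{(m-1)(m+3)^{3}}{16m^{3}},\qquad \beta=\frac{(m-1)^{3}(m+3)}{16m},\label{eq:param}
\end{equation}
equivalently the multiplier formulas $m=1+2(\beta^{3}/\alpha)^{1/8}$ and $3/m=2(\alpha^{3}/\beta)^{1/8}-1$; these are standard and may be quoted from \cite{B}. Since $0<q<1$ forces $z_{3}=\varphi^{2}(q^{3})<\varphi^{2}(q)=z_{1}$, the multiplier satisfies $m>1$, so $m-1,\,m+3$ and $m$ are all positive and every fractional power occurring below is taken with its positive real branch.

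From \eqref{eq:param} I would first record the elementary consequences that feed into all seven relations:
\[
\frac{\beta}{\alpha}=\frac{m^{2}(m-1)^{2}}{(m+3)^{2}},\qquad \sqrt{\frac{\beta}{\alpha}}=\frac{m(m-1)}{m+3},\qquad \sqrt[4]{\frac{\beta}{\alpha}}=\frac{\sqrt{m(m-1)}}{\sqrt{m+3}},
\]
together with
\[
\alpha^{1/2}=\frac{(m-1)^{1/2}(m+3)^{3/2}}{4m^{3/2}},\qquad \beta^{1/2}=\frac{(m-1)^{3/2}(m+3)^{1/2}}{4m^{1/2}},\qquad (\alpha\beta)^{1/8}=\sqrt{\frac{(m-1)(m+3)}{4m}}.
\]
The point to notice is that, although these expressions carry half-integer powers of $m-1$, $m+3$ and $m$, in each relation they occur in combinations whose total exponents are integral, so the alleged identities are genuinely rational in $m$.

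With these substitutions each relation becomes a routine verification. For instance \eqref{eq:4-1} collapses to the trivial $4m/(m+3)=4m/(m+3)$, and \eqref{eq:4-3} to $16(m-1)/(m+3)$ on both sides; in \eqref{eq:4-4} both sides reduce, after clearing denominators, to the common factor $m^{4}+24m^{3}+18m^{2}-27$ (which need not itself factor). I would treat \eqref{eq:4-2} and \eqref{eq:4-5} in the same manner, and verify the two-signed relations \eqref{eq:4-6} and \eqref{eq:44-7} for each choice of sign separately. In every case the substitution from \eqref{eq:param} turns the claim into a polynomial identity in $m$ that is checked by direct expansion.

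The main obstacle is the bookkeeping with fractional powers rather than any conceptual difficulty: in the five relations that involve $\alpha^{1/2}$, $\beta^{1/2}$ or $(\alpha\beta)^{1/8}$ — namely \eqref{eq:4-2}, \eqref{eq:4-4}, \eqref{eq:4-5}, \eqref{eq:4-6} and \eqref{eq:44-7} — one must confirm that the square roots of $m-1$, $m+3$ and $m$ pair up correctly, and on the correct branch (using $m>1$), so that the irrational factors cancel and leave a polynomial identity. Once this pairing is verified, the parametrization \eqref{eq:param} reduces the whole theorem to a finite list of elementary algebraic checks.
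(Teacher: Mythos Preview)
Your proposal is correct and follows essentially the same route as the paper: both arguments substitute the classical degree-$3$ parametrization $\alpha=(m-1)(m+3)^{3}/(16m^{3})$, $\beta=(m-1)^{3}(m+3)/(16m)$ (which the paper cites as \cite[(6.3.19), (6.3.20), (6.3.23)]{B}) and reduce each of \eqref{eq:4-1}--\eqref{eq:44-7} to an elementary identity in $m$. The paper displays the intermediate simplified forms of the two sides (e.g.\ the common value $(m^{4}+24m^{3}+18m^{2}-27)/(16m^{3})\cdot\sqrt{(m-1)/(m(3+m))}$ for \eqref{eq:4-4}), whereas you describe the verifications more summarily, but the method is the same.
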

\noindent{\it Proof.} The identity \eqref{eq:4-1} follows easily
from \cite[(6.3.23)]{B}.

We now show \eqref{eq:4-2}. It follows from \cite[(6.3.19) and (6.3.20)]{B}
that 
\begin{align*}
 & (\alpha\beta)^{1/8}(m^{2}\alpha^{1/2}-3\beta{}^{1/2})\\
 & =\frac{(m-1)(3+m)}{8m}\bigg((3+m)m-3(m-1)\bigg)\\
 & =\frac{(m-1)(3+m)(m^{2}+3)}{8m}
\end{align*}
and 
\begin{align*}
 & m^{2}\alpha+3\beta\\
 & =\frac{(m-1)(3+m)}{16m}((3+m)^{2}+3(m-1)^{2})\\
 & =\frac{(m-1)(3+m)(m^{2}+3)}{4m}.
\end{align*}
From these identities the formula \eqref{eq:4-2} follows readily.

The identity \eqref{eq:4-3} can be obtained by combining \cite[(6.3.19)]{B}
and \cite[(6.3.23)]{B}.

We now prove \eqref{eq:4-4}. It can be deduced from \cite[(6.3.19) and (6.3.23)]{B}
that
\begin{align*}
\frac{1}{m}\sqrt[4]{\frac{\beta}{\alpha}}\bigg(1+\alpha\bigg) & =\frac{1}{m}\sqrt{\frac{m(m-1)}{3+m}}\bigg(1+\frac{(m-1)(3+m)^{3}}{16m^{3}}\bigg)\\
 & =\frac{m^{4}+24m^{3}+18m^{2}-27}{16m^{3}}\sqrt{\frac{m-1}{m(3+m)}}
\end{align*}
and 
\begin{align*}
 & \frac{\alpha{}^{1/2}}{4}\bigg(1+\frac{18}{m^{2}}\sqrt{\frac{\beta}{\alpha}}-\frac{27}{m^{4}}\frac{\beta}{\alpha}\bigg)\\
 & =\frac{3+m}{16m}\sqrt{\frac{(m-1)(3+m)}{m}}\bigg(1+\frac{18(m-1)}{m(3+m)}-\frac{27}{m^{2}}\frac{(m-1)^{2}}{(3+m)^{2}}\bigg)\\
 & =\frac{m^{4}+24m^{3}+18m^{2}-27}{16m^{3}}\sqrt{\frac{m-1}{m(3+m)}}.
\end{align*}
From these two identities \eqref{eq:4-4} follows quickly.

We then deduce \eqref{eq:4-5}. It follows from \cite[(6.3.20) and (6.3.23)]{B}
that 
\begin{align*}
m\sqrt[4]{\frac{\alpha}{\beta}}\bigg(1+\beta\bigg) & =\bigg(\frac{m(3+m)}{m-1}\bigg)^{1/2}\bigg(1+\frac{(m-1)^{3}(3+m)}{16m}\bigg)\\
 & =\frac{m^{4}-6m^{2}+24m-3}{16}\bigg(\frac{3+m}{m(m-1)}\bigg)^{1/2}
\end{align*}
and
\begin{align*}
 & \frac{\beta{}^{1/2}}{4}\bigg(\frac{m^{4}\alpha}{\beta}-6m^{2}\sqrt{\frac{\alpha}{\beta}}-3\bigg)\\
 & =\frac{m-1}{16}\bigg(\frac{(m-1)(3+m)}{m}\bigg)^{1/2}\bigg(\frac{m^{2}(3+m)^{2}}{(m-1)^{2}}-\frac{6m(3+m)}{m-1}-3\bigg)\\
 & =\frac{m^{4}-6m^{2}+24m-3}{16}\bigg(\frac{3+m}{m(m-1)}\bigg)^{1/2},
\end{align*}
from which \eqref{eq:4-5} is obtained.

We now derive \eqref{eq:4-6}. Using \cite[(6.3.19) and (6.3.23)]{B}
we get
\begin{align*}
 & \frac{1}{m}\sqrt[4]{\frac{\beta}{\alpha}}\bigg(1\pm\alpha{}^{1/2}\bigg)^{2}\\
 & =\sqrt{\frac{m-1}{m(3+m)}}\bigg(1\pm\frac{3+m}{4m}\sqrt{\frac{(m-1)(3+m)}{m}}\bigg)^{2}\\
 & =\frac{m^{4}+24m^{3}+18m^{2}-27}{16m^{3}}\sqrt{\frac{m-1}{m(3+m)}}\pm\frac{m^{2}+2m-3}{2m^{2}}
\end{align*}
and
\begin{align*}
 & \frac{\alpha{}^{1/2}}{4}\bigg(1\mp\frac{1}{m}\sqrt[4]{\frac{\beta}{\alpha}}\bigg)\bigg(1\pm\frac{3}{m}\sqrt[4]{\frac{\beta}{\alpha}}\bigg)^{3}\\
 & =\frac{3+m}{16m}\sqrt{\frac{(m-1)(3+m)}{m}}\bigg(1\mp\sqrt{\frac{m-1}{m(3+m)}}\bigg)\bigg(1\pm3\sqrt{\frac{m-1}{m(3+m)}}\bigg)^{3}\\
 & =\frac{m^{4}+24m^{3}+18m^{2}-27}{16m^{3}}\sqrt{\frac{m-1}{m(3+m)}}\pm\frac{m^{2}+2m-3}{2m^{2}}.
\end{align*}
From these two identities we obtain \eqref{eq:4-6}.

Finally, we show \eqref{eq:44-7}. We deduce from \cite[(6.3.20) and (6.3.23)]{B}
that
\begin{align*}
m\sqrt[4]{\frac{\alpha}{\beta}}(1\pm\beta{}^{1/2})^{2} & =\sqrt{\frac{m(3+m)}{m-1}}\bigg(1\pm\frac{m-1}{4}\sqrt{\frac{(m-1)(3+m)}{m}}\bigg)^{2}\\
 & =\frac{m^{4}-6m^{2}+24m-3}{16m}\sqrt{\frac{m(3+m)}{m-1}}\pm\frac{m^{2}+2m-3}{2}
\end{align*}
and
\begin{align*}
 & \frac{\beta{}^{1/2}}{4}\bigg(m\sqrt[4]{\frac{\alpha}{\beta}}\mp1\bigg)^{3}\bigg(m\sqrt[4]{\frac{\alpha}{\beta}}\pm3\bigg)\\
 & =\frac{m-1}{16}\sqrt{\frac{(m-1)(3+m)}{m}}\bigg(\sqrt{\frac{m(3+m)}{m-1}}\mp1\bigg)^{3}\bigg(\sqrt{\frac{m(3+m)}{m-1}}\pm3\bigg)\\
 & =\frac{m^{4}-6m^{2}+24m-3}{16m}\sqrt{\frac{m(3+m)}{m-1}}\pm\frac{m^{2}+2m-3}{2}.
\end{align*}
From these we arrive at \eqref{eq:44-7}. This completes the proof
of Theorem \ref{t2-2}. \qed

\subsection{Proof of Theorem \ref{t2-1}}

In this subsection, we only consider modular equations of degree 3
and then we always assumed that $n=3$ and
\begin{equation}
m=\frac{z_{1}}{z_{3}}.\label{eq:t22-0}
\end{equation}

We are now ready to show Theorem \ref{t2-1}.

\noindent{\it Proof of Theorem \ref{t2-1}.} According to \cite[Theorem 2.3]{E},
we know that \eqref{eq:11-5} is equivalent to \eqref{eq:11-6}, so
we only need to prove one of them. In this section we shall show the
identities \eqref{eq:11-2}, \eqref{eq:11-6}, \eqref{eq:11-4}\textendash \eqref{eq:11-10}.
If the identities in Theorem \ref{t2-1} hold for $0<q<1,$ then,
by analytic continuation, these identities are also true for $|q|<1.$
So we can assume that $0<q<1.$

Let 
\[
\psi(q)=\sum_{n=0}^{\infty}q^{n(n+1)/2}.
\]
It follows from \eqref{eq:0} and \cite[(1.3.14)]{B} that 
\begin{equation}
\Pi_{q}=q^{1/4}\psi^{2}(q).\label{eq:t0-1}
\end{equation}
Then the identities \eqref{eq:11-2}, \eqref{eq:11-6}, \eqref{eq:11-4}\textendash \eqref{eq:11-10}
are respectively equivalent to 
\begin{align}
\frac{\psi^{2}(q^{2})\psi^{4}(q^{3})}{\psi^{2}(q^{6})\psi^{4}(q)} & =\frac{\psi^{2}(q^{2})-q\psi^{2}(q^{6})}{\psi^{2}(q^{2})+3q\psi^{2}(q^{6})},\label{eq:21-1}\\
\frac{\psi^{2}(q^{6})}{\psi^{2}(q^{2})}\frac{\psi^{8}(q)}{\psi^{8}(q^{2})} & =\bigg(1-q\frac{\psi^{2}(q^{6})}{\psi^{2}(q^{2})}\bigg)\bigg(1+3q\frac{\psi^{2}(q^{6})}{\psi^{2}(q^{2})}\bigg)^{3},\label{eq:21-2}\\
\psi(q^{2})\psi(q^{6})(\psi^{4}(q)-3q\psi^{4}(q^{3})) & =\psi(q)\psi(q^{3})(\psi^{4}(q^{2})+3q^{2}\psi^{4}(q^{6})),\label{eq:21-3}\\
\frac{\psi^{2}(q^{3})}{\psi^{2}(q)}\bigg(1+16q\frac{\psi^{8}(q^{2})}{\psi^{8}(q)}\bigg) & =\frac{\psi^{4}(q^{2})}{\psi^{4}(q)}\bigg(1+18q\frac{\psi^{4}(q^{3})}{\psi^{4}(q)}-27q^{2}\frac{\psi^{8}(q^{3})}{\psi^{8}(q)}\bigg),\label{eq:21-4}\\
\frac{\psi^{2}(q)}{\psi^{2}(q^{3})}\bigg(1+16q^{3}\frac{\psi^{8}(q^{6})}{\psi^{8}(q^{3})}\bigg) & =\frac{\psi^{4}(q^{6})}{\psi^{4}(q^{3})}\bigg(\frac{\psi^{8}(q)}{\psi^{8}(q^{3})}-6q\frac{\psi^{4}(q)}{\psi^{4}(q^{3})}-3q^{2}\bigg),\label{eq:21-5}\\
\frac{\psi^{2}(q^{3})}{\psi^{2}(q)}\bigg(1\pm4q^{1/2}\frac{\psi^{4}(q^{2})}{\psi^{4}(q)}\bigg)^{2} & =\frac{\psi^{4}(q^{2})}{\psi^{2}(q)}\bigg(1\mp q^{1/2}\frac{\psi^{2}(q^{3})}{\psi^{2}(q)}\bigg)\bigg(1\pm3q^{1/2}\frac{\psi^{2}(q^{3})}{\psi^{2}(q)}\bigg)^{3},\label{eq:21-6}\\
\frac{\psi^{2}(q)}{\psi^{2}(q^{3})}\bigg(1\pm4q^{3/2}\frac{\psi^{4}(q^{6})}{\psi^{4}(q^{3})}\bigg)^{2} & =\frac{\psi^{4}(q^{6})}{\psi^{4}(q^{3})}\bigg(\frac{\psi^{2}(q)}{\psi^{2}(q^{3})}\mp q^{1/2}\bigg)^{3}\bigg(\frac{\psi^{2}(q)}{\psi^{2}(q^{3})}\pm3q^{1/2}\bigg).\label{eq:21-7}
\end{align}

We first prove \eqref{eq:21-1}. Let $\beta$ have degree 3 over $\alpha.$
Then, by \cite[Theorem 5.4.2, (i) and (iii)]{B},
\begin{align}
\psi(q) & =\sqrt{\frac{z_{1}}{2}}(\alpha/q)^{1/8},\label{eq:33-1}\\
\psi(q^{2}) & =\frac{1}{2}\sqrt{z_{1}}(\alpha/q)^{1/4},\label{eq:33-2}\\
\psi(q^{3}) & =\sqrt{\frac{z_{3}}{2}}(\beta/q^{3})^{1/8},\label{eq:33-3}\\
\psi(q^{6}) & =\frac{1}{2}\sqrt{z_{3}}(\beta/q^{3})^{1/4},\label{eq:33-4}
\end{align}
and so
\begin{align}
\frac{\psi(q^{3})}{\psi(q)} & =q^{-1/4}\sqrt{\frac{z_{3}}{z_{1}}}(\beta/\alpha)^{1/8},\label{eq:33-7}\\
\frac{\psi(q^{6})}{\psi(q^{2})} & =q^{-1/2}\sqrt{\frac{z_{3}}{z_{1}}}(\beta/\alpha)^{1/4}.\label{eq:33-5}
\end{align}
Hence, the formula \eqref{eq:21-1} follows by dividing both sides
of \eqref{eq:4-1} by $m(1+\frac{3}{m}\sqrt{\frac{\beta}{\alpha}})$
and then using \eqref{eq:33-7}, \eqref{eq:33-5} and \eqref{eq:t22-0}
in the resulting identity.

We now prove \eqref{eq:21-2}. It follows from \eqref{eq:33-1} and
\eqref{eq:33-2} that
\begin{equation}
\frac{\psi(q)}{\psi(q^{2})}=\frac{\sqrt{2}}{(\alpha/q)^{1/8}}.\label{eq:33-6}
\end{equation}
Then \eqref{eq:21-2} can be obtained by dividing both sides of \eqref{eq:4-3}
by $\alpha$ and then employing \eqref{eq:33-5}, \eqref{eq:33-6}
and \eqref{eq:t22-0} in the resulting identity.

The identity \eqref{eq:21-3} follows easily by multiplying both sides
of \eqref{eq:4-2} by $\frac{(\alpha\beta)^{1/8}\sqrt{z_{1}z_{3}/q^{3}}z_{3}^{2}}{32}$
and then using \eqref{eq:t22-0} in the resulting equation.

The formula \eqref{eq:21-4} can be deduced by dividing both sides
of \eqref{eq:4-4} by $\sqrt{q}$ and then using \eqref{eq:33-7}
and \eqref{eq:t22-0}.

We then show \eqref{eq:21-5}. It follows from \eqref{eq:33-6} that
\[
\frac{\psi(q^{3})}{\psi(q^{6})}=\frac{\sqrt{2}}{(\beta/q^{3})^{1/8}}
\]
and so
\begin{equation}
\frac{\psi(q^{6})}{\psi(q^{3})}=\frac{(\beta/q^{3})^{1/8}}{\sqrt{2}}.\label{eq:33-8}
\end{equation}
We multiply both sides of \eqref{eq:4-5} by $q^{1/2}$ and then apply
\eqref{eq:33-7}, \eqref{eq:33-8} and \eqref{eq:t22-0} in the resulting
identity to obtain \eqref{eq:21-5}.

The identity \eqref{eq:21-6} can be derived by dividing both sides
of \eqref{eq:4-6} by $q^{1/2}$ and then using \eqref{eq:33-7},
\eqref{eq:33-6} and \eqref{eq:t22-0} in the resulting formula.

The identity \eqref{eq:21-7} follows readily by multiplying both
sides of \eqref{eq:44-7} by $q^{1/2}$ and then employing \eqref{eq:33-7},
\eqref{eq:33-8} and \eqref{eq:t22-0} in the resulting identity.
This finishes the proof of Theorem \ref{t2-1}. \qed

\section{\label{sec:th} Identities involving $\Pi_{q},\Pi_{q^{2}},\Pi_{q^{5}}$
or $\Pi_{q^{10}}$}

\subsection{Statement of results}

Gosper \cite[pp. 103--104]{G} conjectured the following $\Pi_{q}$-identities:
\begin{align}
\Pi_{q^{2}}\Pi_{q^{5}}^{4}(16\Pi_{q^{10}}^{4}-\Pi_{q^{5}}^{4}) & =\Pi_{q^{10}}^{3}(5\Pi_{q^{10}}-\Pi_{q^{2}})(\Pi_{q^{2}}-\Pi_{q^{10}})^{5},\label{eq:1-7}\\
\Pi_{q^{10}}\Pi_{q}^{4}(16\Pi_{q^{2}}^{4}-\Pi_{q}^{4}) & =\Pi_{q^{2}}^{3}(5\Pi_{q^{10}}-\Pi_{q^{2}})^{5}(\Pi_{q^{2}}-\Pi_{q^{10}}),\label{eq:1-2}\\
\Pi_{q}\Pi_{q^{5}}(16\Pi_{q^{2}}^{4}-\Pi_{q}^{4})^{2} & =\Pi_{q^{2}}^{4}(5\Pi_{q^{5}}-\Pi_{q})^{5}(\Pi_{q^{5}}-\Pi_{q}),\label{eq:1-3}\\
\Pi_{q}\Pi_{q^{5}}(16\Pi_{q^{10}}^{4}-\Pi_{q^{5}}^{4})^{2} & =\Pi_{q^{10}}^{4}(5\Pi_{q^{5}}-\Pi_{q})(\Pi_{q^{5}}-\Pi_{q})^{5},\label{eq:1-4}\\
(\Pi_{q}\Pi_{q^{10}}-\Pi_{q^{2}}\Pi_{q^{5}})^{2} & =\Pi_{q^{2}}\Pi_{q^{10}}(\Pi_{q^{5}}-\Pi_{q})(5\Pi_{q^{5}}-\Pi_{q}).\label{eq:1-5}
\end{align}
In this section we will confirm these results.
\begin{thm}
\label{t1} The identities \eqref{eq:1-7}\textendash \eqref{eq:1-5}
are true.
\end{thm}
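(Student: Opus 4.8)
The plan is to carry out the degree-5 analogue of the proof of Theorem \ref{t2-1}, step for step. First I would use \eqref{eq:t0-1}, $\Pi_q=q^{1/4}\psi^2(q)$, to rewrite each of \eqref{eq:1-7}--\eqref{eq:1-5} as an identity among $\psi(q),\psi(q^2),\psi(q^5),\psi(q^{10})$. A bookkeeping of the powers of $q$ shows that the fractional powers match on the two sides and cancel; for example \eqref{eq:1-5} becomes
\[
\bigl(q\psi^2(q)\psi^2(q^{10})-\psi^2(q^2)\psi^2(q^5)\bigr)^2=\psi^2(q^2)\psi^2(q^{10})\bigl(q\psi^2(q^5)-\psi^2(q)\bigr)\bigl(5q\psi^2(q^5)-\psi^2(q)\bigr),
\]
and each of \eqref{eq:1-7}--\eqref{eq:1-4} reduces likewise to a polynomial relation among the same four theta values.

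Next I would parametrize the theta quotients exactly as in \eqref{eq:33-1}--\eqref{eq:33-4}, now taking $\beta$ of degree $5$ over $\alpha$ and $m=z_1/z_5$: from \cite[Theorem 5.4.2]{B}, $\psi(q)=\sqrt{z_1/2}\,(\alpha/q)^{1/8}$, $\psi(q^2)=\tfrac{1}{2}\sqrt{z_1}\,(\alpha/q)^{1/4}$, $\psi(q^5)=\sqrt{z_5/2}\,(\beta/q^5)^{1/8}$, $\psi(q^{10})=\tfrac{1}{2}\sqrt{z_5}\,(\beta/q^5)^{1/4}$. Substituting these, every power of $q$, $z_1$ and $z_5$ cancels and each $\psi$-identity collapses to a modular equation of degree $5$ in $m,\alpha,\beta$. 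Carrying this out, I expect \eqref{eq:1-5} to be equivalent to
\[
m\bigl(\beta^{1/4}-\alpha^{1/4}\bigr)^2=\bigl(\beta^{1/4}-m\alpha^{1/4}\bigr)\bigl(5\beta^{1/4}-m\alpha^{1/4}\bigr)
\]
and \eqref{eq:1-7} to
\[
\bigl(m\alpha^{1/2}-\beta^{1/2}\bigr)^5\bigl(m\alpha^{1/2}-5\beta^{1/2}\bigr)=256\,m\sqrt{\alpha\beta}\,(1-\beta),
\]
with three further relations of the same flavour arising from \eqref{eq:1-2}, \eqref{eq:1-3}, \eqref{eq:1-4}. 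So the whole task reduces to proving, as the degree-5 counterpart of Theorem \ref{t2-2}, this list of modular equations.

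The main obstacle is precisely this last step. In the degree-3 case the proof of Theorem \ref{t2-2} was painless because $\alpha$, $\beta$ and $\sqrt{\beta/\alpha}$ are each explicit \emph{rational} functions of the single multiplier $m$ (from \cite[(6.3.19), (6.3.20), (6.3.23)]{B}), so every relation is a one-line polynomial check in $m$. For degree $5$ there is no such shortcut: the multiplier alone does not rationally parametrize the relation between $\alpha$ and $\beta$. Indeed the relation that \eqref{eq:1-5} reduces to is quadratic in $(\beta/\alpha)^{1/4}$ with coefficients that are polynomials in $m$, so $(\beta/\alpha)^{1/4}$ is not a rational function of $m$. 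I would therefore invoke the fundamental degree-5 modular equation and the multiplier formulas of \cite[Chapter 19]{B} --- for instance $(\alpha\beta)^{1/2}+\{(1-\alpha)(1-\beta)\}^{1/2}+2\{16\alpha\beta(1-\alpha)(1-\beta)\}^{1/6}=1$ together with the expression for $z_1/z_5$ --- introduce a single uniformizing parameter, and write $m$, $\alpha$ and $\beta$ as explicit algebraic functions of it. Each of the modular equations above then becomes a polynomial identity in that one parameter, which can be checked directly.

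Finally, before grinding through all five verifications I would exploit the evident duality interchanging the base variable with the degree-5 variable (equivalently $\alpha\leftrightarrow\beta$, $m\leftrightarrow 5/m$): it pairs \eqref{eq:1-7} with \eqref{eq:1-2} and \eqref{eq:1-3} with \eqref{eq:1-4}, while \eqref{eq:1-5} is self-dual. Within each pair one identity should follow from the other, just as \eqref{eq:11-5} and \eqref{eq:11-6} were identified in Section \ref{sec:tw}, so that only three essentially distinct modular equations remain. Once these are established, reversing the theta parametrization and restoring the cancelled powers of $q$, $z_1$ and $z_5$ recovers \eqref{eq:1-7}--\eqref{eq:1-5} verbatim, completing the proof of Theorem \ref{t1}.
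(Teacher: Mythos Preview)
Your plan coincides with the paper's proof: rewrite via $\Pi_q=q^{1/4}\psi^2(q)$, parametrize $\psi(q),\psi(q^2),\psi(q^5),\psi(q^{10})$ by $z_1,z_5,\alpha,\beta$ using \cite[Theorem~5.4.2]{B}, reduce \eqref{eq:1-7}--\eqref{eq:1-5} to five degree-$5$ modular equations (the paper records these as Theorem~\ref{t2}, and your sample reductions for \eqref{eq:1-5} and \eqref{eq:1-7} agree with \eqref{eq:2-5} and \eqref{eq:2-1} up to rearrangement), and then verify those.

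The only place you are vaguer than the paper is the verification step. You correctly diagnose that $m$ alone does not rationally uniformize and that a quadratic extension is needed, but you do not name the tool. The paper uses Ramanujan's explicit formulas \cite[Chapter~19, (13.12)--(13.15)]{B2} (note: this is \cite{B2}, not \cite{B}), which express $(\alpha/\beta)^{\pm 1/4}$, $(\alpha\beta)^{1/2}$, $\{(1-\alpha)(1-\beta)\}^{1/2}$, and hence $\alpha$ and $\beta$ individually, as rational functions of $m$ and $\rho:=(m^{3}-2m^{2}+5m)^{1/2}$. Substituting these and using $\rho^{2}=m^{3}-2m^{2}+5m$, each of \eqref{eq:2-1}--\eqref{eq:2-5} collapses to an explicit polynomial identity in $m$ and $\rho$, checked mechanically. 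This is exactly the ``uniformizer plus a square root'' you were reaching for; the Schl\"afli-type equation with the $1/6$-power that you quote is not needed. Your duality observation $(\alpha\leftrightarrow\beta,\ m\leftrightarrow 5/m)$ is sound and would halve the work, though the paper does not exploit it and simply verifies all five directly.
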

The identities \eqref{eq:1-7}\textendash \eqref{eq:1-4} only contain
three of the constants $\Pi_{q},\Pi_{q^{2}},\Pi_{q^{5}}$ and $\Pi_{q^{10}},$
but the formula \eqref{eq:1-5} includes all of these four constants.
These five identities have similar styles so that our proofs share
the same pattern. We will show these identities by setting up some
modular equations of degree 5.

\subsection{One lemma}

The value of the multiplier $m$ depends on $n$, but throughout this
subsection and the next subsection we only consider modular equations
of degree 5, then it is always assumed that $n=5$ and
\begin{equation}
m=\frac{z_{1}}{z_{5}}.\label{eq:t2-0}
\end{equation}
In order to prove Theorem \ref{t1} we need several auxiliary results.
\begin{thm}
\label{t2} If $\beta$ has degree $5$ over $\alpha,$ then
\begin{align}
256\frac{z_{1}}{z_{5}}\bigg(\frac{\alpha}{\beta}\bigg)^{1/2}\frac{1}{\beta}\bigg(1-\frac{1}{\beta}\bigg) & =\bigg(5-\frac{z_{1}}{z_{5}}\bigg(\frac{\alpha}{\beta}\bigg)^{1/2}\bigg)\bigg(\frac{z_{1}}{z_{5}}\bigg(\frac{\alpha}{\beta}\bigg)^{1/2}-1\bigg)^{5},\label{eq:2-1}\\
256\frac{z_{5}}{z_{1}}\bigg(\frac{\beta}{\alpha}\bigg)^{1/2}\frac{1}{\alpha}\bigg(1-\frac{1}{\alpha}\bigg) & =\bigg(5\frac{z_{5}}{z_{1}}\bigg(\frac{\beta}{\alpha}\bigg)^{1/2}-1\bigg)^{5}\bigg(1-\frac{z_{5}}{z_{1}}\bigg(\frac{\beta}{\alpha}\bigg)^{1/2}\bigg),\label{eq:2-2}\\
\frac{z_{5}}{z_{1}}\bigg(\frac{\beta}{\alpha}\bigg)^{1/4}\bigg(\alpha-1\bigg)^{2} & =\frac{\alpha}{16}\bigg(5\frac{z_{5}}{z_{1}}\bigg(\frac{\beta}{\alpha}\bigg)^{1/4}-1\bigg)^{5}\bigg(\frac{z_{5}}{z_{1}}\bigg(\frac{\beta}{\alpha}\bigg)^{1/4}-1\bigg),\label{eq:2-3}\\
\frac{z_{1}}{z_{5}}\bigg(\frac{\alpha}{\beta}\bigg)^{1/4}\bigg(\beta-1\bigg)^{2} & =\frac{\beta}{16}\bigg(5-\frac{z_{1}}{z_{5}}\bigg(\frac{\alpha}{\beta}\bigg)^{1/4}\bigg)\bigg(1-\frac{z_{1}}{z_{5}}\bigg(\frac{\alpha}{\beta}\bigg)^{1/4}\bigg)^{5},\label{eq:2-4}\\
\bigg(\frac{z_{1}}{z_{5}}\bigg(\frac{\alpha}{\beta}\bigg)^{1/4}-\frac{z_{1}}{z_{5}}\bigg(\frac{\alpha}{\beta}\bigg)^{1/2}\bigg)^{2} & =\frac{z_{1}}{z_{5}}\bigg(\frac{\alpha}{\beta}\bigg)^{1/2}\bigg(1-\frac{z_{1}}{z_{5}}\bigg(\frac{\alpha}{\beta}\bigg)^{1/4}\bigg)\bigg(5-\frac{z_{1}}{z_{5}}\bigg(\frac{\alpha}{\beta}\bigg)^{1/4}\bigg).\label{eq:2-5}
\end{align}
\end{thm}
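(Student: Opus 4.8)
The plan is to imitate the proof of Theorem~\ref{t2-2}, replacing the degree-$3$ data by their degree-$5$ counterparts. First I would record, from \cite{B}, the degree-$5$ modular-equation formulas expressing $\alpha$, $\beta$ (hence $1-\alpha$, $1-\beta$) and the multiplier $m=z_{1}/z_{5}$, i.e.\ the degree-$5$ analogues of the ingredients \cite[(6.3.19), (6.3.20), (6.3.23)]{B} used above. To organize the computation I would abbreviate
\[
P=\frac{z_{1}}{z_{5}}\Big(\frac{\alpha}{\beta}\Big)^{1/4},\qquad Q=\frac{z_{1}}{z_{5}}\Big(\frac{\alpha}{\beta}\Big)^{1/2}=\frac{P^{2}}{m},
\]
whereupon the reciprocal quantities $\tfrac{z_{5}}{z_{1}}(\beta/\alpha)^{1/4}=1/P$ and $\tfrac{z_{5}}{z_{1}}(\beta/\alpha)^{1/2}=1/Q$ occurring in \eqref{eq:2-2} and \eqref{eq:2-3} are handled automatically. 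In these variables each identity collapses to a single relation among $P$ (or $Q$), $m$ and one of the moduli; for example \eqref{eq:2-1} becomes $256\,Q(\beta-1)/\beta^{2}=(5-Q)(Q-1)^{5}$ and \eqref{eq:2-4} becomes $P(\beta-1)^{2}=\tfrac{\beta}{16}(5-P)(1-P)^{5}$, while \eqref{eq:2-2} and \eqref{eq:2-3} are the companion statements carrying $\alpha$ in place of $\beta$. I would then substitute the degree-$5$ expressions for $\alpha$ or $\beta$ and reduce both sides to a common form, exactly as in the passage \eqref{eq:4-4}--\eqref{eq:44-7}.

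It pays to dispatch \eqref{eq:2-5} first, since it involves only the multiplier and the ratio $\alpha/\beta$, not the moduli individually: a direct manipulation shows it is equivalent to
\[
(m-P)^{2}=m(P-1)(P-5).
\]
This is the natural place to pin down the fundamental degree-$5$ multiplier relation, and once it is in hand it feeds into the derivations of \eqref{eq:2-1}--\eqref{eq:2-4}, each of which needs in addition the explicit value of $\beta$ (for \eqref{eq:2-1}, \eqref{eq:2-4}) or of $\alpha$ (for \eqref{eq:2-2}, \eqref{eq:2-3}).

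The main obstacle is structural. In degree $3$ the moduli $\alpha,\beta$ are \emph{rational} functions of the single multiplier $m$, which is precisely what makes the verifications in Theorem~\ref{t2-2} routine; this fails in degree $5$. Indeed $(m-P)^{2}=m(P-1)(P-5)$ is an irreducible, nonsingular cubic in $(m,P)$, so the curve governing $(\alpha,\beta,m)$ has positive genus and admits no one-parameter rational parametrization. One therefore cannot simply plug in a rational parametrization and expand; the degree-$5$ modular equation must be carried along as a genuine constraint and used to eliminate variables at each stage. The remaining delicate points are the heavy bookkeeping forced by the explicit fifth powers $(\,\cdots-1)^{5}$ and $(5\,\cdots-1)^{5}$, and the consistent choice of branches of the various fourth and eighth roots throughout $0<q<1$ (equivalently $1<m<5$). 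Once the correct form of the degree-$5$ relations and the sign of $P$ are fixed, each of \eqref{eq:2-1}--\eqref{eq:2-5} reduces to an algebraic identity that can be checked directly.
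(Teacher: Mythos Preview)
Your overall strategy---reduce each identity to an algebraic relation in the multiplier data and verify it directly---is exactly what the paper does. The reformulation $(m-P)^{2}=m(P-1)(P-5)$ of \eqref{eq:2-5} is correct and matches the paper's computation.

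Where you diverge from the paper is in how you handle the genus obstruction you correctly flag. You say one ``cannot simply plug in a rational parametrization and expand'' and must instead carry the modular equation as a side constraint. The paper does something cleaner: it invokes Ramanujan's degree-$5$ formulas from \cite[Chapter~19, (13.12)--(13.15)]{B2}, which parametrize $(\alpha/\beta)^{1/4}$, $(\beta/\alpha)^{1/4}$, $(\alpha\beta)^{1/2}$, $\{(1-\alpha)(1-\beta)\}^{1/2}$---and hence $\alpha,1-\alpha,\beta,1-\beta$ individually---as explicit rational functions of $m$ and the single auxiliary radical $\rho=(m^{3}-2m^{2}+5m)^{1/2}$. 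With this two-variable uniformization one \emph{can} simply substitute and expand: both sides of each of \eqref{eq:2-1}--\eqref{eq:2-5} become polynomials in $m$ and $\rho$, and after replacing $\rho^{2}$ by $m^{3}-2m^{2}+5m$ they agree identically. So the genus is not an obstacle to ``plug in and expand''; it only forces the parametrization to live over a quadratic extension rather than over $\mathbb{Q}(m)$.

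Two small practical points. First, the formulas you need are in \cite{B2}, not \cite{B}; the latter does not record the degree-$5$ parametrization in the required form. Second, your relation $(m-P)^{2}=m(P-1)(P-5)$ is equivalent to $P=(2m+\rho)/(m-1)$, which is exactly \eqref{eq:t2-1} in the paper combined with $P=m(\alpha/\beta)^{1/4}$; so once you locate the $(m,\rho)$ formulas, your treatment of \eqref{eq:2-5} and the paper's coincide.
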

\begin{proof}
We first prove \eqref{eq:2-1} and \eqref{eq:2-4}. According to \cite[Chapter 19, (13.12)--(13.15)]{B2}
we have
\begin{align}
\left(\frac{\alpha}{\beta}\right)^{1/4} & =\frac{2m+\rho}{m(m-1)},\label{eq:t2-1}\\
\left(\frac{\beta}{\alpha}\right)^{1/4} & =\frac{2m-\rho}{5-m},\label{eq:t2-4}\\
\left(\frac{1-\beta}{1-\alpha}\right)^{1/4} & =\frac{2m+\rho}{5-m},\nonumber \\
(\alpha\beta)^{1/2} & =\frac{4m^{3}-16m^{2}+20m+\rho(m^{2}-5)}{16m^{2}},\label{eq:t2-6}\\
\{(1-\alpha)(1-\beta)\}^{1/2} & =\frac{4m^{3}-16m^{2}+20m-\rho(m^{2}-5)}{16m^{2}},\label{eq:t2-5}
\end{align}
where 
\[
\rho=(m^{3}-2m^{2}+5m)^{1/2}.
\]
Then
\begin{align}
\beta & =\bigg(\frac{2m-\rho}{5-m}\bigg)^{2}\frac{4m^{3}-16m^{2}+20m+\rho(m^{2}-5)}{16m^{2}},\label{eq:t2-2}\\
1-\beta & =\bigg(\frac{2m+\rho}{5-m}\bigg)^{2}\frac{4m^{3}-16m^{2}+20m-\rho(m^{2}-5)}{16m^{2}}.\label{eq:t2-3}
\end{align}
Substituting \eqref{eq:t2-0}, \eqref{eq:t2-1}, \eqref{eq:t2-2}
and \eqref{eq:t2-3} into both sides of each of the identities \eqref{eq:2-1}
and \eqref{eq:2-4}, noticing that $\rho=(m^{3}-2m^{2}+5m)^{1/2}$
and then simplifying we find that both sides of each of the identities
\eqref{eq:2-1} and \eqref{eq:2-4} are respectively equal to 
\[
\bigg(\frac{2}{m-1}\bigg)^{2}A(m)
\]
 and 
\[
\frac{m-5}{256m(m-1)}B(m),
\]
where 
\begin{align*}
A(m) & =4m^{9}-24m^{8}+m^{8}\rho+64m^{7}-2m^{7}\rho-200m^{6}+6m^{6}\rho-40m^{5}-98m^{5}\rho\\
 & -40m^{4}+80m^{4}\rho-1280m^{3}-470m^{3}\rho-504m^{2}-470m^{2}\rho-28m-70m\rho-\rho
\end{align*}
and 
\begin{align*}
B(m) & =2m^{6}-10m^{5}+m^{5}\rho-5m^{4}\rho+4m^{4}+10m^{3}\rho\\
 & -4m^{3}-102m^{2}-42m^{2}\rho-18m-27m\rho-\rho.
\end{align*}
These prove \eqref{eq:2-1} and \eqref{eq:2-4}.

We now show \eqref{eq:2-2} and \eqref{eq:2-3}. According to \cite[Chapter 19, (13.12)]{B2}
we get
\begin{equation}
\left(\frac{1-\alpha}{1-\beta}\right)^{1/4}=\frac{2m-\rho}{m(m-1)}.\label{eq:t2-7}
\end{equation}
It follows from \eqref{eq:t2-1}, \eqref{eq:t2-6}, \eqref{eq:t2-5}
and \eqref{eq:t2-7} that 
\begin{align}
\alpha & =\bigg(\frac{2m+\rho}{m(m-1)}\bigg)^{2}\frac{4m^{3}-16m^{2}+20m+\rho(m^{2}-5)}{16m^{2}},\label{eq:t2-8}\\
1-\alpha & =\bigg(\frac{2m-\rho}{m(m-1)}\bigg)^{2}\frac{4m^{3}-16m^{2}+20m-\rho(m^{2}-5)}{16m^{2}}.\label{eq:t2-9}
\end{align}
We subsitute \eqref{eq:t2-0}, \eqref{eq:t2-4}, \eqref{eq:t2-8}
and \eqref{eq:t2-9} into both sides of each of the identities \eqref{eq:2-2}
and \eqref{eq:2-3}, note that $\rho=(m^{3}-2m^{2}+5m)^{1/2}$ and
then simplify to deduce that both sides of each of the identities
\eqref{eq:2-2} and \eqref{eq:2-3} equal
\[
-\frac{2^{12}m^{2}}{(m-5)^{12}}C(m)
\]
and 
\[
\frac{1-m}{256m^{6}(m-5)}D(m)
\]
respectively, where 
\begin{align*}
C(m) & =28m^{9}+2520m^{8}\text{\textminus}m^{8}\rho+32000m^{7}\text{\textminus}350m^{7}\rho+5000m^{6}\text{\textminus}11750m^{6}\rho\\
 & +25000m^{5}\text{\textminus}58750m^{5}\rho+625000m^{4}+50000m^{4}\rho\text{\textminus}1000000m^{3}\text{\textminus}306250m^{3}\rho\\
 & +1875000m^{2}+93750m^{2}\rho\text{\textminus}1562500m\text{\textminus}156250m\rho+390625\rho,
\end{align*}
and
\begin{align*}
D(m) & =18m^{6}+510m^{5}\text{\textminus}m^{5}\rho+100m^{4}\text{\textminus}135m^{4}\rho\text{\textminus}1050m^{3}\rho\\
 & \text{\textminus}500m^{3}+1250m^{2}\rho+6250m^{2}\text{\textminus}6250m\text{\textminus}3125m\rho+3125\rho,
\end{align*}
which prove \eqref{eq:2-2} and \eqref{eq:2-3}.

We finally prove \eqref{eq:2-5}. We subsitute \eqref{eq:t2-0} and
\eqref{eq:t2-1} into both sides of \eqref{eq:2-5} and then simplify
using the identity $\rho=(m^{3}-2m^{2}+5m)^{1/2}$ to derive that
both sides of \eqref{eq:2-5} are equal to
\[
\frac{(m-5)^{2}(m^{3}\text{\textminus}m^{2}+7m+2m\rho+1+2\rho)}{(m-1)^{4}},
\]
from which \eqref{eq:2-5} follows readily. This concludes the proof
of Theorem \ref{t2}.
\end{proof}

\subsection{Proof of Theorem \ref{t1}}

In this subsection we will prove Theorem \ref{t1}.

\noindent{\it Proof of Theorem \ref{t1}.} Using the equation \eqref{eq:t0-1}
we see that the identities \eqref{eq:1-7}\textendash \eqref{eq:1-5}
are respectively equivalent to
\begin{align}
\frac{\psi^{2}(q^{2})}{\psi^{2}(q^{10})}\frac{\psi^{8}(q^{5})}{\psi^{8}(q^{10})}\bigg(16q^{5}-\frac{\psi^{8}(q^{5})}{\psi^{8}(q^{10})}\bigg) & =\bigg(5q^{2}-\frac{\psi^{2}(q^{2})}{\psi^{2}(q^{10})}\bigg)\bigg(\frac{\psi^{2}(q^{2})}{\psi^{2}(q^{10})}-q^{2}\bigg)^{5},\label{eq:3-1}\\
\frac{\psi^{2}(q^{10})}{\psi^{2}(q^{2})}\frac{\psi^{8}(q)}{\psi^{8}(q^{2})}\bigg(16q-\frac{\psi^{8}(q)}{\psi^{8}(q^{2})}\bigg) & =\bigg(5q^{2}\frac{\psi^{2}(q^{10})}{\psi^{2}(q^{2})}-1\bigg)^{5}\bigg(1-q^{2}\frac{\psi^{2}(q^{10})}{\psi^{2}(q^{2})}\bigg),\label{eq:3-2}\\
\frac{\psi^{2}(q^{5})}{\psi^{2}(q)}\bigg(16q\frac{\psi^{8}(q^{2})}{\psi^{8}(q)}-1\bigg)^{2} & =\frac{\psi^{8}(q^{2})}{\psi^{8}(q)}\bigg(5q\frac{\psi^{2}(q^{5})}{\psi^{2}(q)}-1\bigg)^{5}\bigg(q\frac{\psi^{2}(q^{5})}{\psi^{2}(q)}-1\bigg),\label{eq:3-3}\\
\frac{\psi^{2}(q)}{\psi^{2}(q^{5})}\bigg(16q^{5}\frac{\psi^{8}(q^{10})}{\psi^{8}(q^{5})}-1\bigg)^{2} & =\frac{\psi^{8}(q^{10})}{\psi^{8}(q^{5})}\bigg(5q-\frac{\psi^{2}(q)}{\psi^{2}(q^{5})}\bigg)\bigg(q-\frac{\psi^{2}(q)}{\psi^{2}(q^{5})}\bigg)^{5},\label{eq:3-4}\\
\bigg(q\frac{\psi^{2}(q)}{\psi^{2}(q^{5})}-\frac{\psi^{2}(q^{2})}{\psi^{2}(q^{10})}\bigg)^{2} & =\frac{\psi^{2}(q^{2})}{\psi^{2}(q^{10})}\bigg(q-\frac{\psi^{2}(q)}{\psi^{2}(q^{5})}\bigg)\bigg(5q-\frac{\psi^{2}(q)}{\psi^{2}(q^{5})}\bigg).\label{eq:3-5}
\end{align}

We temporarily assume that $0<q<1.$ Let $\beta$ have 5 degree over
$\alpha.$ According to \cite[Theorem 5.4.2 (i) and (iii)]{B} we
have
\begin{align}
\psi(q) & =\sqrt{\frac{z_{1}}{2}}(\alpha/q)^{1/8},\label{eq:3-6}\\
\psi(q^{2}) & =\frac{1}{2}\sqrt{z_{1}}(\alpha/q)^{1/4},\label{eq:3-7}\\
\psi(q^{5}) & =\sqrt{\frac{z_{5}}{2}}(\beta/q^{5})^{1/8},\label{eq:3-8}\\
\psi(q^{10}) & =\frac{1}{2}\sqrt{z_{5}}(\beta/q^{5})^{1/4}.\label{eq:3-9}
\end{align}
It follows from \eqref{eq:3-7}, \eqref{eq:3-8} and \eqref{eq:3-9}
that
\begin{align}
\frac{\psi(q^{2})}{\psi(q^{10})} & =\sqrt{\frac{z_{1}}{z_{5}}}\bigg(\frac{\alpha}{\beta}\bigg)^{1/4}q,\label{eq:3-10}\\
\frac{\psi(q^{5})}{\psi(q^{10})} & =\frac{\sqrt{2}}{(\beta/q^{5})^{1/8}}.\label{eq:3-11}
\end{align}
Multiplying both sides of \eqref{eq:2-1} by $q^{12}$ and then using
\eqref{eq:3-10} and \eqref{eq:3-11} in the resulting equation we
can easily obtain the identity \eqref{eq:3-1}.

It is easily deduced from \eqref{eq:3-6} and \eqref{eq:3-7} that
\begin{equation}
\frac{\psi(q)}{\psi(q^{2})}=\frac{\sqrt{2}}{(\alpha/q)^{1/8}}.\label{eq:3-12}
\end{equation}
Then \eqref{eq:3-2} follows by substituting \eqref{eq:3-10} and
\eqref{eq:3-12} into \eqref{eq:2-2}.

It is easily seen from \eqref{eq:3-6} and \eqref{eq:3-8} that
\begin{equation}
\frac{\psi(q^{5})}{\psi(q)}=\sqrt{\frac{z_{5}}{z_{1}}}\bigg(\frac{\beta}{\alpha}\bigg)^{1/8}/q^{1/2}.\label{eq:3-13}
\end{equation}
Then \eqref{eq:3-3} follows easily by dividing both sides of \eqref{eq:2-3}
by $q$ and then using \eqref{eq:3-12} and \eqref{eq:3-13} in the
resulting identity.

Multiplying both sides of \eqref{eq:2-4} by $q$ and then employing
\eqref{eq:3-11} and \eqref{eq:3-13} in the resulting equation we
can attain \eqref{eq:3-4}.

The identity \eqref{eq:3-5} follows readily by multiplying both sides
of \eqref{eq:2-5} by $q^{4}$ and then using \eqref{eq:3-10} and
\eqref{eq:3-13} in the resulting identity.

From these we see that \eqref{eq:3-1}\textendash \eqref{eq:3-5}
holds for $0<q<1.$ By analytic continuation, these identities are
also true for $|q|<1.$ This completes the proof of Theorem \ref{t1}.
\qed

\section*{Acknowledgement}

 This work was partially supported by the National Natural Science
Foundation of China (Grant No. 11801451) and the Natural Science Foundation
of Hunan Province (Grant No. 2020JJ5682).


\begin{thebibliography}{1}
\bibitem{AAE}S. Abo Touk, Z. Al Houchan and M. El Bachraoui, Proofs
for two $q$-trigonometric identities of Gosper. J. Math. Anal. Appl.
456(1)(2017), 662\textendash 670.

\bibitem{B}B.C. Berndt, Number Theory in the Spirit of Ramanujan,
American Mathematical Society, Providence, RI, 2006.

\bibitem{B2}B.C. Berndt, Ramanujan's Notebooks, Part III, Springer-Verlag,
New York, 1991.

\bibitem{E}M. El Bachraoui, On the Gosper\textquoteright s $q$-constant
$\Pi_{q}.$ Acta Mathematica Sinica, English Series. 34(11)(2018),
1755\textendash 1764

\bibitem{G}R.W. Gosper, Experiments and discoveries in $q$-trigonometry,
in: F.G. Garvan, M.E.H. Ismail (Eds.), Symbolic Computation, Number
Theory, Special Functions, Physics and Combina- torics, Kluwer, Dordrecht,
Netherlands, 2001, pp.79\textendash 105.

\bibitem{HZ}B. He and H.-C. Zhai, Proofs for certain $q$-trigonometric
identities of Gosper, Science China\textendash Mathematics, 2020,
http://engine.scichina.com/doi/10.1007/s11425-019-9555-1.

\bibitem{L1}Z.-G. Liu, Addition formulas for Jacobi theta functions,
Dedekind's eta functions, and Ramanujan's congruences, Pacific J.
Math. 240(1)(2009), 135\textendash 150.

\bibitem{L2}Z.-G. Liu, An addition formula for the Jacobian theta
function and its applications, Adv. Math. 212(1)(2007), 389\textendash 406.
\end{thebibliography}
\end{document}